\newtheorem{theorem}{Theorem}
\newtheorem*{theorem*}{Theorem}
\newtheorem{proposition}[theorem]{Proposition}
\newtheorem*{proposition*}{Proposition}
\newtheorem{lemma}[theorem]{Lemma}
\newtheorem{corollary}[theorem]{Corollary}
\newtheorem{definition}[theorem]{Definition}
\newtheorem{property}[theorem]{Property}
\newtheorem{remark}[theorem]{Remark}
\newtheorem{observation}[theorem]{Observation}
\newtheorem{question}[theorem]{Question}
\newcommand{\tH}{\textnormal{H}}
\title{Simple closed curves, non-kernel homology and Magnus embedding}
\author{Adam Klukowski\footnote{University of Oxford, \texttt{klukowski@maths.ox.ac.uk}}}
\begin{document}

\maketitle

\begin{abstract}
We consider the subspace of the homology of a covering space spanned by lifts of simple closed curves. Our main result is the existence of unbranched covers of surfaces where this is a proper subspace. More generally, for a fixed finite solvable quotient of the fundamental group we exhibit a cover whose homology is not generated by the lifts of curves in the complement of its kernel. We explain how the existing approach of Malestein and Putman (for branched covers) relates to the Magnus embedding, and by doing so we simplify their construction. We then generalise it to unbranched covers by producing embeddings of surface groups into units of certain graded associative algebras, which may be of independent interest.
\end{abstract}

\section{Introduction}\label{sec:introduction}

Consider a finite-degree cover $p \colon \widetilde{\Sigma} \rightarrow \Sigma_{g, n}$ of an orientable surface $\Sigma_{g, n}$ of genus $g$ with $n$ punctures. The \emph{simple closed curve homology} $\tH_1^{\text{scc} | \Sigma_{g, n}} (\widetilde{\Sigma} ; M)$ is defined as the subspace of $\tH_1(\widetilde{\Sigma}; M)$ spanned by the set
\begin{equation*}
\left\{ [\widetilde{\gamma}] \in \tH_1(\widetilde{\Sigma}; M) \ \big| \ \widetilde{\gamma} \text{ is a component of } p^{-1} (\gamma) \text{ for a simple closed curve } \gamma \text{ on } \Sigma_{g, n} \right\}
\end{equation*}
This can be defined for any $\mathbb{Z}$-module $M$.

Our main theorem is:

\begin{theorem}\label{thm:scc_homology}
For any $g \geq 2$, $n \geq 0$, and odd prime $r$ there is a cover $\widetilde{\Sigma} \rightarrow \Sigma_{g, n}$, whose degree is a power of $r$, and such that $\tH_1^{\textnormal{scc} | \Sigma_{g, n}} (\widetilde{\Sigma} ; \mathbb{Q}) \neq \tH_1 (\widetilde{\Sigma} ; \mathbb{Q})$.
\end{theorem}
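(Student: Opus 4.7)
\noindent The plan is to produce a regular cover $\widetilde{\Sigma}\to\Sigma_{g,n}$ whose deck transformation group $G$ is a finite $r$-group, and then exploit the resulting $\mathbb{Q}[G]$-module structure on $\tH_1(\widetilde{\Sigma};\mathbb{Q})$. Since $G$ permutes the components of $p^{-1}(\gamma)$ for every simple closed curve $\gamma$, the subspace $\tH_1^\text{scc}(\widetilde{\Sigma};\mathbb{Q})$ is automatically a $\mathbb{Q}[G]$-submodule, so it suffices to exhibit $G$ for which this submodule is proper. This reframing is convenient because the standard machinery of covering-space homology (Fox calculus, the relation module) identifies $\tH_1(\widetilde{\Sigma};\mathbb{Q})$ with an explicit quotient of a free $\mathbb{Q}[G]$-module, giving a concrete linear-algebraic target.

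\medskip

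\noindent To build $G$, I would construct a Magnus-style embedding $\psi:\pi_1(\Sigma_{g,n})\hookrightarrow A^\times$ into the group of units of a filtered associative algebra $A$, and then take $G$ to be the image of $\pi_1(\Sigma_{g,n})$ in the quotient of $A^\times$ by a sufficiently deep filtration piece (after reducing coefficients so as to make $G$ an $r$-group). The essential point, already hinted at in the abstract, is that for any simple closed curve $\gamma$ the truncated expansion $\psi(\gamma)$ satisfies a multiplicative identity not satisfied by arbitrary elements of $\pi_1$, reflecting that $\gamma$ is embedded. Transporting this identity back through Fox calculus cuts out a proper $\mathbb{Q}[G]$-subspace of $\tH_1(\widetilde{\Sigma};\mathbb{Q})$ that contains every lift of every simple closed curve, which is precisely what the theorem demands.

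\medskip

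\noindent The principal obstacle is constructing the embedding $\psi$ in the closed case $n=0$, where $\pi_1(\Sigma_g)$ is not free and the classical Magnus embedding into the power series algebra $\mathbb{Z}\langle\langle X_1,\ldots,X_{2g}\rangle\rangle$ does not respect the surface relation $\prod_{i=1}^{g}[a_i,b_i]=1$. My approach would be to embed $\pi_1(\Sigma_g)$ into the units of a carefully chosen quotient of the free Magnus algebra, designed so that the image of the surface word is killed at each filtration level while retaining enough non-commutative structure to separate group elements. Checking that the resulting map is still injective — and that the simple-closed-curve identity survives passage to this quotient — is the genuinely new technical step, which the abstract advertises as of independent interest.

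\medskip

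\noindent With the embedding in hand, the remainder is a dimension count on the associated graded pieces of $A$: by pushing the truncation deeper, one enlarges $G$ and amplifies the gap between the dimension of the subspace forced by the simple-closed-curve identity and the full rational dimension of $\tH_1(\widetilde{\Sigma};\mathbb{Q})$, yielding the strict inclusion $\tH_1^\text{scc}(\widetilde{\Sigma};\mathbb{Q})\subsetneq\tH_1(\widetilde{\Sigma};\mathbb{Q})$ for a suitable choice of truncation level.
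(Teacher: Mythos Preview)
Your proposal has a real gap at its central step. You assert that ``for any simple closed curve $\gamma$ the truncated expansion $\psi(\gamma)$ satisfies a multiplicative identity not satisfied by arbitrary elements of $\pi_1$, reflecting that $\gamma$ is embedded.'' No such identity is supplied, none is known, and the abstract does not promise one: the Magnus-type embeddings in this paper serve a different purpose entirely. There is in fact no direct algebraic criterion in a truncated Magnus algebra that singles out simple closed curves among all elements of $\pi_1\Sigma_{g,n}$, and your later ``dimension count on graded pieces'' presupposes exactly such a criterion. Without it, nothing in your outline separates $\tH_1^{\text{scc}}$ from $\tH_1$.

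The paper's actual route never tries to detect simplicity algebraically. Instead: (i) simple closed curves lie in finitely many $\mathrm{Aut}(\pi_1\Sigma_{g,n})$-orbits, so by residual-$r$-ness there is a finite $r$-group quotient $\theta:\pi_1\Sigma_{g,n}\to Q$ whose kernel misses every simple closed curve, reducing to $\tH_1^{\theta\neq 1}\neq\tH_1$; (ii) solvability of $Q$ and a stability lemma further reduce to insufficiency of $d$-primitive homology; (iii) only then does the Magnus-style construction enter, and it is used to manufacture a finite $r$-group $G$ with a central $C\leq G$ and maps $\rho,\alpha,\Psi$ such that every $g\in G\setminus\ker\alpha$ has $g^{r^k}\in C\setminus\ker\Psi$; (iv) an induced representation $\mathrm{Ind}_C^G W$ from a nontrivial central character then yields an irreducible $\mathbb{Q}[G]$-summand $V\subseteq\tH_1(\widetilde\Sigma;\mathbb{Q})$ on which every element with nonzero mod-$r$ abelianisation has no nonzero fixed vector, so every elevation of a $d$-primitive curve projects to $0$ in $V$. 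Thus the Magnus machinery controls \emph{powers of elements with nontrivial mod-$r$ abelianisation}, not embeddedness, and the strict inclusion comes from a fixed-vector argument in a specific irreducible, not from a dimension comparison.
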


The question whether simple closed curve homology is the whole of $\tH_1 (\Sigma ; \mathbb{Q})$ was an open problem asked by March\'{e} \cite{homology_generated_by_lifts_of_simple_curves_Marche12} and independently by Looijenga \cite{alggeo_related_to_mcg_Looijenga15}. An affirmative answer for abelian and 2-step nilpotent covers, in the context of graphs instead of surfaces, was given by Farb and Hensel \cite{finite_covers_of_graphs_FarbHensel16}. Koberda and Santharoubane \cite{surf_gps_and_homology_of_covers_via_quantum_reps_KoberdaSantharoubane16} used TQFT methods to find covers of surfaces with $\tH_1^\text{scc} (\widetilde{\Sigma} ; \mathbb{Z}) \neq \tH_1 (\widetilde{\Sigma} ; \mathbb{Z})$, but their method could not rule out the possibility of $\tH_1^\text{scc} (\widetilde{\Sigma} ; \mathbb{Z})$ being finite-index (equivalently, it did not apply to homology with rational coefficients). Another family of covers whose integral simple closed curve homology is a proper submodule was noted by Irmer \cite{lifts_of_sccs_in_regular_coverings_Irmer18}. A particular example of an unbranched cover $\Sigma_{649} \rightarrow \Sigma_2$ with $\tH_1^{\text{scc} | \Sigma_2} (\Sigma_{649} ; \mathbb{Q}) \neq \tH_1 (\Sigma_{649} ; \mathbb{Q})$ was noted by Markovi\'c and To\v{s}i\'c \cite{second_variation_of_hodge_norm_and_higher_Prym_reps_MarkovicTosic22} based on the work of Markovi\'c \cite{unramified_correspondences_and_virtual_properties_of_mcg_Markovic22} and Bogomolov and Tschinkel \cite{unramified_correspondences_BogomolovTschinkel02}.

This article builds on the work of Malestein and Putman \cite{scc_covers_powersubgroups_OutFn_MalesteinPutman19} and earlier ideas of Farb and Hensel \cite{finite_covers_of_graphs_FarbHensel16}. In \cite{scc_covers_powersubgroups_OutFn_MalesteinPutman19} they constructed covers where $\tH_1^\text{scc} (\widetilde{\Sigma} ; \mathbb{Q}) \neq \tH_1 (\widetilde{\Sigma} ; \mathbb{Q})$; however, their construction requires $n \geq 1$, or equivalently, produces branched covers. Here we settle the general case and exhibit unbranched covers with $\tH_1^\text{scc} (\widetilde{\Sigma} ; \mathbb{Q}) \neq \tH_1 (\widetilde{\Sigma} ; \mathbb{Q})$ in all genera. We relate earlier approaches to the Magnus embedding \cite{combinatorial_gptheo_MagnusKarrassSolitar04}. We then produce similar embeddings of surface groups, which may be of independent interest.

\subsection{Homologies of covers and subgroups}

One can ask about the subspaces of homology of a covering space $\widetilde{\Sigma}$ spanned by more general sets. Following \cite{finite_covers_of_graphs_FarbHensel16}, we define an \emph{elevation} $\tilde{\gamma}$ of a loop $\gamma$ on $\Sigma$ to be a minimal concatenation of lifts of $\gamma$ which forms a closed curve in $\widetilde{\Sigma}$. Given a set $\mathcal{O}$ of loops on $\Sigma$ we define $\tH_1^{\mathcal{O} | \Sigma} (\widetilde{\Sigma} ; M)$ to be the subspace of $\tH_1 (\widetilde{\Sigma} ; M)$ spanned by
\begin{equation*}
\left\{ [\tilde{\gamma}] \ \big|\ \tilde{\gamma} \text{ is an elevation of a curve } \gamma \in \mathcal{O} \right\}
\end{equation*}
We present a properness (in the subset/subspace sense) result for a large class of subsets in Theorem \ref{thm:nonkernel_homology}. For a quotient $\theta \colon \pi_1 \Sigma \rightarrow Q$ write $\tH_1^{\theta \neq 1} (\widetilde{\Sigma} ; \mathbb{Q}) = \tH_1^{(\pi_1 \Sigma \setminus \ker \theta) | \Sigma} (\widetilde{\Sigma} ; \mathbb{Q})$ to be the subspace spanned by elevations of curves which are nontrivial in $Q$.

\begin{theorem}\label{thm:nonkernel_homology}
Let $g \geq 2$, $n \geq 0$, and $\theta \colon \pi_1 \Sigma_{g, n} \rightarrow Q$ be a nontrivial finite solvable quotient of odd order. There exists a normal cover $\widetilde{\Sigma} \rightarrow \Sigma_{g, n}$ such that $\tH_1^{\theta \neq 1} (\widetilde{\Sigma} ; \mathbb{Q}) \neq \tH_1 (\widetilde{\Sigma} ; \mathbb{Q})$. Furthermore, we may require that all prime factors of the degree of this cover divide $|Q|$.
\end{theorem}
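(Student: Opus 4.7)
The plan is to construct $\widetilde{\Sigma}$ as a solvable cover sitting above the $Q$-cover of $\Sigma_{g,n}$ and to detect a non-trivial cohomology class on $\widetilde{\Sigma}$ via a Magnus-style homomorphism that vanishes on every elevation of a curve outside $\ker\theta$. The overall scheme mirrors the Malestein--Putman construction for branched covers, but with the Fox-calculus / Magnus framework promised in the introduction replacing their puncture-specific computation; the surface-group Magnus embedding developed later in the paper is what supplies the missing ingredient in the closed / unbranched case.

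Write $K=\ker\theta$ and pick a prime $p\mid|Q|$. Set $N=[K,K]K^{p}$, a characteristic subgroup of $K$ and hence normal in $\pi_1\Sigma_{g,n}$, and let $\widetilde{\Sigma}$ be the corresponding normal cover. Its degree equals $|Q|\cdot[K:N]$, and since $K/N$ is an elementary abelian $p$-group every prime divisor of the degree divides $|Q|$. For non-abelian $Q$ one may iterate this step along a central or derived series of $K$, adding further elementary abelian layers, with the same book-keeping controlling the primes appearing.

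The detecting class comes from the (surface-group) Magnus embedding $\phi$ attached to $\theta$, whose target is a group of upper-triangular $2\times 2$ matrices with diagonal entries in $Q$ and upper-right entries in $\mathbb{Z}[Q]^{m}$ for an appropriate $m$. The upper-right entry defines a crossed homomorphism that restricts to an honest homomorphism $D:K\to\mathbb{Z}[Q]^{m}$. For $\gamma\in\pi_1\Sigma_{g,n}$ with $\theta(\gamma)\neq 1$ of order $k$, its elevation in $\widetilde{\Sigma}$ is a power $\gamma^{\ell}$ with $k\mid\ell$, and
\[
D(\gamma^{\ell})=\bigl(1+\theta(\gamma)+\cdots+\theta(\gamma)^{\ell-1}\bigr)\cdot D(\gamma),
\]
an integer multiple of the norm $N_{\langle\theta(\gamma)\rangle}$ of the cyclic subgroup $\langle\theta(\gamma)\rangle\leq Q$. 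Composing $D$ with the projection of $\mathbb{Z}[Q]^{m}$ onto the quotient by the submodule generated by all such cyclic norms — for instance reducing mod $p$ first — therefore sends every non-kernel elevation to zero. Non-triviality of the resulting class on $\tH_1(\widetilde{\Sigma};\mathbb{Q})$ can be witnessed by a direct calculation on a concrete element of $N$, e.g.\ a commutator of generators chosen so that its image under $D$ has a component lying outside the norm submodule.

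The main obstacle is making Fox calculus compatible with the surface relation $\prod_i[a_i,b_i]=1$: the Fox derivatives of the generators of $\pi_1\Sigma_{g,0}$ are no longer independent, and the resulting linear constraint on $D$ could a priori force our would-be detecting functional into the trivial part of the target. This is precisely where the paper's surface-group Magnus embedding is needed, and verifying that the norm-killing quotient stays large enough to survive this constraint — and to survive iteration in the non-abelian case — is where I expect the bulk of the technical work to sit.
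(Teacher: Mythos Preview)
There is a structural gap. Since $D$ is a genuine homomorphism on $K$, it factors through $K^{\mathrm{ab}}$; hence $D|_N$ factors through the image of $N^{\mathrm{ab}}\to K^{\mathrm{ab}}$, and the detecting functional you build is really a functional on $\tH_1(\widehat{\Sigma};\mathbb{Q})$ pulled back along $\widetilde{\Sigma}\to\widehat{\Sigma}$. The extra layer $N=[K,K]K^{p}$ therefore buys nothing, and iterating further down only makes this worse (on any subgroup of $[K,K]$ the map $D$ is identically zero). Your mod-$p$ suggestion fails outright: $D$ kills $[K,K]$ and sends $k^{p}\mapsto p\,D(k)$, so $D(N)\subseteq p\,\mathbb{Z}[Q]^{m}$ and vanishes mod $p$. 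Working over $\mathbb{Q}$ instead, the quotient of $\mathbb{Q}[Q]^{m}$ by the $\mathbb{Q}[Q]$-submodule generated by all nontrivial cyclic norms $N_H$ retains exactly the isotypic components for irreducible $Q$-representations on which every nontrivial element acts without nonzero fixed vectors. Such a representation must be faithful, and most solvable $Q$ admit none: already for $Q=(\mathbb{Z}/p)^{2}$ every rational irreducible has kernel of order $p$. So for generic $Q$ there is no functional on $K^{\mathrm{ab}}\otimes\mathbb{Q}$ that kills all norm-multiples yet is nonzero somewhere, and the argument cannot close. (Incidentally, a commutator of the standard surface generators typically does not lie in $N$ either.)

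The paper's route is quite different. It first passes to the $Q$-cover $\widehat{\Sigma}$ and, via a tower through the derived series of $Q$ together with a stability lemma for $\mathbb{F}_r$-nontriviality under normal covers, shows that every elevation of a non-kernel curve becomes $d$-primitive on $\widehat{\Sigma}$ for $d$ the radical of $|Q|$. This reduces everything to the insufficiency of $d$-primitive homology. For that the paper does \emph{not} seek a fixed-point-free representation of $Q$; instead it manufactures a much larger finite $r$-group $G$ --- units in a truncated noncommutative polynomial algebra over $\mathbb{F}_r$, with a quaternion-flavoured variant to accommodate the surface relation --- together with a map $\rho:\pi_1\widehat{\Sigma}\to G$ engineered so that $G$ carries a representation on which every element outside the mod-$r$ abelianisation kernel acts without fixed vectors. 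The witnessing cover is $\ker\rho$, and the Gasch\"utz decomposition places that representation inside its rational homology. The Magnus-type algebra is used to construct $G$, not as a metabelian crossed homomorphism detecting elevations directly.
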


Let us point out some consequences of Theorem \ref{thm:nonkernel_homology}. They were previously known when the fundamental group is free, and can now be extended to (unpunctured) surfaces. Primitive homology (2) was introduced in \cite{finite_covers_of_graphs_FarbHensel16}. Statement (3) generalises \cite[Theorem C]{scc_covers_powersubgroups_OutFn_MalesteinPutman19}.

\begin{corollary}\label{cor:insufficiencies}
Let $g \geq 2$, $n \geq 0$. For each of the following sets $\mathcal{O}$ of loops on $\Sigma_{g, n}$ there exists a normal cover $\widetilde{\Sigma} \rightarrow \Sigma_{g, n}$ such that $\tH_1^{\mathcal{O} | \Sigma_{g, n}} (\widetilde{\Sigma} ; \mathbb{Q})$ is not the whole of $\tH_1 (\widetilde{\Sigma} ; \mathbb{Q})$.
\begin{enumerate}[label=(\arabic*)]
\item\label{scor:scc_insufficiency} Theorem \ref{thm:scc_homology}: $\mathcal{O}$ consisting of simple closed curves.
\item\label{scor:primitive_insufficiency} Primitive homology: $\mathcal{O}$ consisting of the elements which belong to the generating set in some standard presentation of $\pi_1 \Sigma_{g, n}$.
\item\label{scor:orbit_insufficiency} More generally, any set $\mathcal{O}$ contained in the union of finitely many $\text{Aut} (\pi_1 \Sigma_{g, n})$-orbits.
\end{enumerate}
Furthermore, for any odd prime $r$ we may require the degree of the cover be a power of $r$.
\end{corollary}

Additionally, the following special case of Theorem \ref{thm:nonkernel_homology} generalises Theorem D from \cite{scc_covers_powersubgroups_OutFn_MalesteinPutman19} (where again their original proof required $n \geq 1$). Denote by $\mathcal{O}_d$ the set of all loops which map to nonzero vectors in $\tH_1 (\Sigma_{g, n} ; \tfrac{\mathbb{Z}}{d})$, and define the $d$-primitive homology to be $\tH_1^{d\textnormal{-prim}} (\widetilde{\Sigma} ; \mathbb{Q}) = \tH_1^{\mathcal{O}_d} (\widetilde{\Sigma} ; \mathbb{Q})$.

\begin{proposition}\label{prop:dprimitive_homology}
Let $g \geq 2, n \geq 0$, and $d$ be a square-free integer. There exists a normal cover $\widetilde{\Sigma} \rightarrow \Sigma_{g, n}$ with $\tH_1^{d\textnormal{-prim}} (\widetilde{\Sigma} ; \mathbb{Q}) \neq \tH_1 (\widetilde{\Sigma} ; \mathbb{Q})$. Furthermore, we may require its degree to divide some power of $d$.
\end{proposition}

\begin{remark}
It was asked by Kent \cite{homology_generated_by_lifts_of_simple_curves_Marche12} whether we can take $\mathcal{O}$ to be the set of all curves which do not fill $\Sigma_{g, n}$. This is not answered by Theorem \ref{thm:nonkernel_homology}: if $Q$ is a finite quotient of the fundamental group and $\gamma$ a non-filling curve, then $\gamma^{|Q|}$ is non-filling and trivial in $Q$. After the orignal announcement of this article, Kent's question was answered in the positive by \cite[Theorem D]{generating_homology_of_covers_of_surfaces_BoggiPutmanSalter23}.
\end{remark}

\begin{remark}
When $n \geq 1$ we can remove the restriction of odd order of $Q$ from Theorem \ref{thm:nonkernel_homology}. However, our proof still relies on solvability (this is why it is emphasised, even though every finite group of odd order is necessarily solvable \cite{solvability_of_groups_of_odd_order_FeitThompson63}).
\end{remark}

\paragraph{Algebraic interpretation.} Let $R$ be a finite-index subgroup of $\pi_1 \Sigma_{g, n}$ and $\mathcal{O} \subseteq \pi_1 \Sigma_{g, n}$. Algebraically, the elevation of $\gamma$ is the element $\gamma^m$, where $m$ is the minimal positive integer with $\gamma^m \in R$. Note that topological elevations of (unbased) loop $\gamma$ are precisely the algebraic elevations of (based) loops in the conjugacy class in $\pi_1 \Sigma_{g, n}$ corresponding to $\gamma$. Then $\tH_1^{\mathcal{O}} (R ; M)$ is the subspace of $\tH_1 (R ; M)$ spanned by the set
\begin{equation*}
\left\{ [x^m] \in \tH_1 (R ; M) \ \big| \ x \in \mathcal{O} \text{ and } m \in \mathbb{Z} \text{ such that } x^m \in R \right\}
\end{equation*}
When $R = \pi_1 \widetilde{\Sigma}$ is the subgroup corresponding to the cover and $\mathcal{O}$ is closed under conjugation, then naturally $\tH_1^\mathcal{O} (R ; M) \cong \tH_1^{\mathcal{O}} (\widetilde{\Sigma} ; M)$.

\subsection{Other results}

A couple of interesting statements about free groups was noted by Malestein and Putman \cite[Theorems F and G]{scc_covers_powersubgroups_OutFn_MalesteinPutman19} based on an argument of \cite{surf_gps_and_homology_of_covers_via_quantum_reps_KoberdaSantharoubane16}. Our Theorem \ref{thm:nonkernel_homology} allows to apply their argument to surface groups, giving the following corollary.

\begin{theorem}\label{thm:FG}
Let $g \geq 2$, and $\theta \colon \pi_1 \Sigma_g \rightarrow Q$ be a finite solvable quotient of odd order. There exists an integral linear representation $\rho \colon \pi_1 \Sigma_g \rightarrow \textnormal{GL}_d \mathbb{Z}$ with infinite image such that every element of $\rho (\pi_1 \Sigma_g \setminus \ker \theta)$ has finite order.
\end{theorem}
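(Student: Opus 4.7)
The plan is to run the Koberda–Santharoubane construction with Theorem~\ref{thm:nonkernel_homology} as input, exactly as Malestein–Putman used their free-group analogue to deduce their Theorems F and G. Apply Theorem~\ref{thm:nonkernel_homology} to obtain a normal finite cover $\widetilde{\Sigma} \to \Sigma_g$ with $\tH_1^{\theta\neq 1}(\widetilde{\Sigma};\mathbb{Q}) \subsetneq \tH_1(\widetilde{\Sigma};\mathbb{Q})$. Write $\pi = \pi_1 \Sigma_g$, $N = \pi_1 \widetilde{\Sigma}$, $G = \pi/N$, and set $L = \tH_1^{\theta\neq 1}(\widetilde{\Sigma};\mathbb{Q}) \cap \tH_1(\widetilde{\Sigma};\mathbb{Z})$; this is a saturated, $G$-invariant sublattice, so $W := \tH_1(\widetilde{\Sigma};\mathbb{Z})/L$ is a nonzero finitely generated free abelian group with a $G$-action.

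Identifying $N/[N,N]$ with $\tH_1(\widetilde{\Sigma};\mathbb{Z})$, let $\widetilde{L} \lhd \pi$ be the preimage of $L$ in $\pi$, and form $E := \pi/\widetilde{L}$, which sits in the group extension
\begin{equation*}
1 \longrightarrow W \longrightarrow E \longrightarrow G \longrightarrow 1.
\end{equation*}
Let $\rho_0 : \pi \twoheadrightarrow E$ be the quotient map. The two desired properties then follow immediately. Infinite image: $\rho_0(N) = W \neq 0$, where nonvanishing is precisely Theorem~\ref{thm:nonkernel_homology}. Finite order on non-kernel elements: for $x \in \pi \setminus \ker\theta$, if $m \geq 1$ is minimal with $x^m \in N$ then $[x^m] \in \tH_1^{\theta\neq 1}(\widetilde{\Sigma};\mathbb{Q})$ by the defining property of the non-kernel elevation, so $[x^m] \in L$, so $\rho_0(x^m) = 1_E$, so $\rho_0(x)$ has order dividing $m$.

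The remaining and only nontrivial step — and the main obstacle — is to promote $\rho_0$ to an injective integral linear representation of $E$. Since $E$ is finitely generated with the free abelian subgroup $W$ of finite index, this is the standard statement that crystallographic groups are $\mathbb{Z}$-linear: the extension class of $E$ in $H^2(G, W)$ is killed by $|G|$, so after replacing $W$ with the commensurable lattice $W' = \tfrac{1}{|G|} W \supseteq W$ the pushed-forward extension splits, and the resulting semidirect product $W' \rtimes G$ embeds into $\mathrm{GL}_{\mathrm{rk}(W)+1}(\mathbb{Z})$ via the affine action $(w, g) \cdot (v, t) = (g v + tw, t)$ on $W' \oplus \mathbb{Z}$. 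Composing $\rho_0$ with the induced embedding $E \hookrightarrow \mathrm{GL}_d(\mathbb{Z})$ yields the sought representation $\rho$.
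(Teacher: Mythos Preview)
Your argument is correct and is exactly the Koberda--Santharoubane / Malestein--Putman construction that the paper invokes; the paper itself does not spell out a proof of Theorem~\ref{thm:FG} but simply says that Theorem~\ref{thm:nonkernel_homology} allows one to run their argument for surface groups, which is precisely what you do.

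One small imprecision worth flagging: the affine action of $W' \rtimes G$ on $W' \oplus \mathbb{Z}$ is an \emph{embedding} only when $G$ acts faithfully on $W'$, which need not hold here. This does not affect the theorem, however: the map is injective on $W'$ (translations), so the image of $\pi$ is still infinite, and finite-order elements of $E$ certainly stay finite-order under any homomorphism. If you want an honest embedding of $E$, simply add a faithful integral $G$-representation (e.g.\ the regular representation) as a block-diagonal summand.
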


The statement of Theorem \ref{thm:FG} remains true if we replace the complement of a kernel with any of the special subsets from Corollary \ref{cor:insufficiencies}.

\medskip

Our key technical contribution, which allows to adapt the proofs to surface groups, is the following variation on the Magnus embedding. Here $\pi_1 \Sigma_2 = \langle s_1, t_1, s_2, t_2 | [s_1, t_1] [s_2, t_2] = 1 \rangle$ is the surface group of genus 2, and $\mathbb{Z} [\![ A, B ]\!] [i, j, k]$ is the quaternion algebra of the ring of formal power series in two variables with integer coefficients.

\begin{proposition*}[Lemma \ref{lmm:hom_to_quat}]
There exists a homomorphism $\overline{\tau} \colon \pi_1 \Sigma_2 \rightarrow \mathbb{Z} [\![ A, B ]\!] [i, j, k]^\times$ satisfying
\begin{equation*}
\overline{\tau} (s_1) \equiv 1 + A i, \quad \overline{\tau} (t_1) \equiv 1 + B j, \quad \overline{\tau} (s_2) \equiv 1 + A j, \quad \overline{\tau} (t_2) \equiv 1 + B i \qquad \text{mod } (A, B)^2
\end{equation*}
\end{proposition*}

\subsection{Outline and organisation}

Our approach consists of two steps.
\smallskip

The first step is to construct covers whose homology can see the interesting curves. In Section \ref{ssec:reduction_to_dprimitive} we reduce Theorem \ref{thm:scc_homology} on simple closed curve homology, Corollary \ref{cor:insufficiencies}, and Theorem \ref{thm:nonkernel_homology} to a special case of \ref{thm:nonkernel_homology} -- Proposition \ref{prop:dprimitive_homology} about $d$-primitive homology.

\smallskip

Section \ref{ssec:pf_of_dprimitive} serves as a bridge between the first and second step. Here we view the homology of a cover as a representation of the deck group, and prove Proposition \ref{prop:dprimitive_homology}. To control the representations we use a structure we name \emph{polynomial substitute} (Definition \ref{def:polynomial_substitute}, and existence by Propositions \ref{prop:polynomial_substitute_free} and \ref{prop:polynomial_substitute_surface}). Informally, they are a device for evaluating polynomials on the homology coefficients.

\smallskip

The second step is the construction of polynomial substitutes. We start with the case of punctured surfaces and free groups and prove Proposition \ref{prop:polynomial_substitute_free} in Section \ref{ssec:pf_polynomial_substitute_free}. We translate the proof from \cite{scc_covers_powersubgroups_OutFn_MalesteinPutman19} into the language of Magnus embedding, which leads to significant simplifications. In Section \ref{ssec:pf_polynomial_substitute_surface} we prove Proposition \ref{prop:polynomial_substitute_surface} by carrying out an analogous construction for surface groups.

\bigskip

\noindent\textbf{Acknowledgements.} I wish to thank Vlad Markovi\'{c}, Misha Schmalian, Ognjen To\v{s}i\'{c}, Dawid Kielak, Ric Wade and Andy Putman for helpful suggestions and discussions. I would also like to thank the anonymous reviewer for greatly improving the clarity of this paper. This work was supported by the Simons Foundation.

\section{Relationships between homologies of covers}

The aim of this section is to relate various flavours of cover homology, and eventually reduce the properness of $\tH_1^\text{scc}$, $\tH_1^\mathcal{O}$ and $\tH_1^{\theta \neq 1}$ to the properness of $\tH_1^{d\text{-prim}}$. Our approach is, given a set $\mathcal{O}$ of curves on $\Sigma$, to find a cover $\widehat{\Sigma} \rightarrow \Sigma$ with a nicer set $\widehat{\mathcal{O}}$ (of curves on $\widehat{\Sigma}$) such that all further covers $\widetilde{\Sigma} \rightarrow \widehat{\Sigma}$ satisfy
\begin{equation*}
\tH_1^{\mathcal{O} | \Sigma} \left( \widetilde{\Sigma} ; \mathbb{Q} \right) \ \subseteq \ \tH_1^{\widehat{\mathcal{O}} | \widehat{\Sigma}} \left( \widetilde{\Sigma} ; \mathbb{Q} \right)
\end{equation*}
Then properness of $\widehat{\mathcal{O}}$-homology implies the properness of $\mathcal{O}$-homology.

\subsection{Stability of curve characteristics under taking covers}

Note that each of the properties of interest is stable with respect to taking covers, in the sense that if a curve has a property then so do all its elevations. These facts will be used in Section \ref{ssec:reduction_to_dprimitive} to reduce the general Theorem \ref{thm:nonkernel_homology} to the special case of $d$-primitive homology.

\begin{lemma}\label{lmm:curve_stability}
Let $p \colon \widetilde{\Sigma} \rightarrow \Sigma$ be a finite-degree cover, $\gamma$ a curve on $\Sigma$, and $\tilde{\gamma}$ its elevation to $\widetilde{\Sigma}$. Then:
\begin{enumerate}[label=(\arabic*)]
\item\label{slmm:stability_of_simplicity} If $\gamma$ is simple, then so is $\tilde{\gamma}$.
\item\label{slmm:stability_of_nonfilling} If $\gamma$ does not fill $\Sigma$ then $\tilde{\gamma}$ does not fill $\widetilde{\Sigma}$.
\item\label{slmm:stability_of_rprimitivity} If $p$ is normal, $r$ is a prime, and $[\gamma] \neq 0$ in $\tH_1 ( \Sigma ; \mathbb{F}_r )$, then also $[\tilde{\gamma}] \neq 0$ in $\tH_1 (\widetilde{\Sigma} ; \mathbb{F}_r)$.
\end{enumerate}
\end{lemma}

\begin{proof}
\ref*{slmm:stability_of_simplicity} A self-intersection of $\tilde{\gamma}$ would map to a self-intersection of $\gamma$ under $p$.

\ref*{slmm:stability_of_nonfilling} Let $X$ be a component of $\Sigma \setminus \gamma$ which is not a topological disk. Its preimage $p^{-1} (X)$ is a component of $\widetilde{\Sigma} \setminus p^{-1} (\gamma)$ which is not a disk, so $p^{-1} (\gamma)$ does not fill $\widetilde{\Sigma}$. Since $\tilde{\gamma}$ is a subset of $p^{-1} (\gamma)$, it does not fill $\widetilde{\Sigma}$ either.

\ref*{slmm:stability_of_rprimitivity} Let $\sigma$ be the deck transformation generated by $\gamma$ and define the space $\Sigma' = \nicefrac{\widetilde{\Sigma}}{\langle \sigma \rangle}$. It fits in the sequence
\begin{equation*}\begin{tikzcd}
\widetilde{\Sigma} \arrow[r, "\tilde{p}"] \arrow[rr, bend right, "p"] & \Sigma' \arrow[r, "p'"] & \Sigma
\end{tikzcd}\end{equation*}
of covering maps. The loop $\gamma$ lifts to $\Sigma'$, so $p'_* ([\gamma]) = [\gamma]$, so $[\gamma] \neq 0$ in $\tH_1 (\Sigma' ; \mathbb{F}_r)$.

\smallskip

The cover $\tilde{p} \colon \widetilde{\Sigma} \rightarrow \Sigma'$ is cyclic, so can be realised as a tower of cyclic covers of prime degrees. Therefore, it is now enough to prove Lemma \ref{lmm:curve_stability}\ref*{slmm:stability_of_rprimitivity} for covers where the deck group is the cyclic group of prime order and generated by $\sigma$. If the degree is coprime to $r$, then $p_* ([\gamma]) \neq 0$, so \ref{lmm:curve_stability}\ref*{slmm:stability_of_rprimitivity} holds.

\smallskip

The only remaining case is when $p \colon \widetilde{\Sigma} \rightarrow \Sigma$ is a cyclic cover of degree $r$, and $\sigma$ generates the deck group. Then $\tilde{\gamma}$ is the lift of $\gamma^r$. A part of the five-term exact sequence for $0 \rightarrow \pi_1 \widetilde{\Sigma} \rightarrow \pi_1 \Sigma \rightarrow \tfrac{\mathbb{Z}}{r} \rightarrow 0$ is
\begin{equation*}\begin{tikzcd}
0 = \tH_2 (\tfrac{\mathbb{Z}}{r} ; \mathbb{Z}) \arrow[r] & \tH_1 (\widetilde{\Sigma} ; \mathbb{Z})_{\langle \sigma \rangle} \arrow[r, "p_*"] & \tH_1 (\Sigma ; \mathbb{Z}) \arrow[r] & \tH_1 (\tfrac{\mathbb{Z}}{r} ; \mathbb{Z}) = \tfrac{\mathbb{Z}}{r} \arrow[r] & 0
\end{tikzcd}\end{equation*}
Therefore there exist bases in which $p_*$ becomes the standard inclusion
\begin{equation*}
\tH_1 (\widetilde{\Sigma} ; \mathbb{Z})_{\langle \sigma \rangle} \cong r \mathbb{Z} \oplus \mathbb{Z}^{m-1} \hookrightarrow \mathbb{Z}^m \cong \tH_1 (\Sigma ; \mathbb{Z})
\end{equation*}
Let $\pi \colon \tH_1 (\Sigma ; \mathbb{Z}) \rightarrow \mathbb{Z}$ be the projection onto the first factor. Since $\sigma$ (i.e. ``translation by $\gamma$'') generates the deck group, we have $\pi (\gamma) \in \mathbb{Z} \setminus r \mathbb{Z}$. Therefore $\pi \circ p (\tilde{\gamma}) = \pi (\gamma^r) \in r \mathbb{Z} \setminus r^2 \mathbb{Z}$. Then $r^{-1} \cdot \pi \circ p_*$ is a $\mathbb{Z}$-module homomophism $\tH_1 (\widetilde{\Sigma} ; \mathbb{Z}) \rightarrow \mathbb{Z}$ which sends $[\widetilde{\gamma}]$ to $\mathbb{Z} \setminus r \mathbb{Z}$, so $[\widetilde{\gamma}] \neq 0$ in $\tH_1 (\widetilde{\Sigma} ; \mathbb{F}_r)$.
\end{proof}

\subsection{Reduction to $d$-primitive homology}\label{ssec:reduction_to_dprimitive}

Here we reduce Theorem \ref{thm:nonkernel_homology} and Corollary \ref{cor:insufficiencies} to Proposition \ref{prop:dprimitive_homology} -- properness of $d$-primitive homology. We start by showing how Corollary \ref{cor:insufficiencies} follows from Theorem \ref{thm:nonkernel_homology}.

\begin{proof}[Proof of Corollary \ref{cor:insufficiencies}, assuming Theorem \ref{thm:nonkernel_homology}:] \

Cor \ref{cor:insufficiencies}\ref*{scor:orbit_insufficiency} (orbits) $\Rightarrow$ Cor \ref{cor:insufficiencies}\ref*{scor:scc_insufficiency} (scc): $\text{Mcg} (\Sigma_{g, n})$ acts transitively on non-separating simple closed curves, while separating simple closed curves fall into finitely many orbits (parameterised by genus and partition of punctures \cite[Chapter 1.3]{primer_on_mapping_class_groups_FarbMargalit12}). Since $\text{Mcg} (\Sigma_{g, n})$ acts on $\pi_1 \Sigma_{g, n}$ by outer automorphisms, the set of simple closed curves is contained in a union of finitely many $\text{Aut} (\pi_1 \Sigma_{g, n})$-orbits.

\medskip

Cor \ref{cor:insufficiencies}\ref*{scor:orbit_insufficiency} (orbits) $\Rightarrow$ Cor \ref{cor:insufficiencies}\ref*{scor:primitive_insufficiency} (primitive): $\text{Aut} (\pi_1 \Sigma_{g, n})$ acts transitively on primitive elements.

\medskip

Thm \ref{thm:nonkernel_homology} $\Rightarrow$ Cor \ref{cor:insufficiencies}\ref*{scor:orbit_insufficiency} (orbits): Let $r$ be a prime number. Both free and surface groups are residually $r$-groups, so there exists a finite $r$-group quotient $\theta \colon \pi_1 \Sigma_{g, n} \rightarrow Q$ whose kernel is disjoint from $\mathcal{O}$. Then $\tH_1^{\mathcal{O}} (\widetilde{\Sigma} ; \mathbb{Q}) \subseteq \tH_1^{\theta \neq 1} (\widetilde{\Sigma} ; \mathbb{Q})$ for any cover $\widetilde{\Sigma} \rightarrow \Sigma_{g, n}$. Also, $Q$ is an $r$-group and hence solvable.
\end{proof}

Proposition \ref{prop:dprimitive_homology} about $d$-primitive homology is just a special case of Theorem \ref{thm:nonkernel_homology}, when the quotient $Q$ is $\tH_1 (\Sigma ; \tfrac{\mathbb{Z}}{d})$. Below we show that they are actually equivalent.

\begin{proof}[Proof of Theorem \ref{thm:nonkernel_homology} assuming Proposition \ref{prop:dprimitive_homology}:]
Let $\widehat{\Sigma}$ be the cover corresponding to $\ker \theta$. Denote the derived series of $Q$ as
\begin{equation*}
Q = Q^{(0)} \triangleright Q^{(1)} \triangleright \dots \triangleright Q^{(s)} = \{ 1 \}
\end{equation*}
By refining if necessary, we can assume that $\nicefrac{Q^{(i)}}{Q^{(i+1)}}$ is abelian of square-free order.

Consider $\gamma \in \pi_1 \Sigma_{g, n} \setminus \ker \theta$; then $\theta (\gamma) \in Q^{(i)} \setminus Q^{(i+1)}$ for some $i$. Let $\Sigma'$ be the cover corresponding to $\theta^{-1} (Q^{(i)})$. Then $\widehat{\Sigma}$ factors through $\Sigma'$, the loop $\gamma$ lifts to $\Sigma'$, and is nonzero in the abelian quotient $\nicefrac{Q^{(i)}}{Q^{(i+1)}}$ of $\pi_1 \Sigma'$. Hence $\gamma$ is nonzero in $\tH_1 (\Sigma' ; \mathbb{F}_r)$ for some $r$ dividing $|Q|$. By Lemma \ref{lmm:curve_stability}\ref*{slmm:stability_of_rprimitivity} we know that elevations of $\gamma$ to $\widehat{\Sigma}$ coming from the lift to $\Sigma'$ are nonzero in $\tH_1 (\widehat{\Sigma} ; \mathbb{F}_r)$. But we can obtain all the other elevations by the action of $Q$, so they are all nonzero in $\tH_1 (\widehat{\Sigma} ; \mathbb{F}_r)$.

We have proved that any elevation of any curve in $\pi_1 \Sigma_{g, n} \setminus \ker \theta$ to $\widehat{\Sigma}$ is non-trivial in $\tH_1 (\widehat{\Sigma} ; \tfrac{\mathbb{Z}}{d})$, where $d$ is the radical of $|Q|$. Therefore, for any further cover $\widetilde{\Sigma} \rightarrow \widehat{\Sigma}$ we have
\begin{equation*}
\tH_1^{\theta \neq 1 | \Sigma_{g, n}} (\widetilde{\Sigma} ; \mathbb{Q}) \subseteq \tH_1^{d\text{-prim} | \widehat{\Sigma}} (\widetilde{\Sigma} ; \mathbb{Q})
\end{equation*}
so a witness to properness of $d$-primitive homology relative to $\widehat{\Sigma}$ (Proposition \ref{prop:dprimitive_homology}) is also a witness to Theorem \ref{thm:nonkernel_homology} for $\Sigma_{g, n}$.
\end{proof}

\subsection{Properness of $d$-primitive homology}\label{ssec:pf_of_dprimitive}

Here we prove Proposition \ref{prop:dprimitive_homology}. This will complete the proof of Theorem \ref{thm:nonkernel_homology} given in Section \ref{ssec:reduction_to_dprimitive}. We start by constructing a group with certain special properties.

\begin{proposition}\label{prop:special_quotient}
Suppose $g \geq 2, n \geq 0$, and $d > 1$ is an odd square-free integer. Write $m = 2 g + \max \{ 0, n - 1 \}$. There exists a finite quotient $\rho \colon \pi_1 \Sigma_{g, n} \rightarrow G$, whose size divides some power of $d$, a homomorphism $\alpha \colon G \rightarrow \left( \tfrac{\mathbb{Z}}{d} \right)^m$, and a nontrivial irreducible rational representation $V$ of $G$ with the following properties.
\begin{enumerate}
\item The composition $\alpha \circ \rho$ equals the abelianisation map $\pi_1 \Sigma_{g, n} \rightarrow \tH_1 (\Sigma_{g, n} ; \tfrac{\mathbb{Z}}{d}) = \left( \tfrac{\mathbb{Z}}{d} \right)^m$.
\item Every element of $G \setminus \ker \alpha$ has no nonzero fixed vectors in $V$.
\end{enumerate}
\end{proposition}

Later, when deducing \ref{prop:dprimitive_homology}, we will arrange $G$ to be the deck group, and the representation $V$ will give rise to a subspace complement to $\tH_1^{d\text{-prim}}$. To control the action of elements of $G$ based on their abelianisation we will use a device we name \emph{polynomial substitute} and define below.

\begin{definition}[polynomial substitute]\label{def:polynomial_substitute}
Let $r$ be a prime, $F$ a group, and $n$ be the dimension of $\tH_1 (F ; \mathbb{F}_r)$. A characteristic $r$ degree-$r^k$ \emph{polynomial substitute} for $F$ is a tuple $(G, C, \rho, \alpha)$ consisting of a finite $r$-group $G$, a central subgroup $C \leq G$, and homomorphisms $\rho \colon F \rightarrow G$, $\alpha \colon G \rightarrow \mathbb{F}_r^n$ satisfying the following properties.
\begin{enumerate}[label=(\alph*)]
\item\label{sdef:compatibility_with_abelianisation} The composition $\alpha \circ \rho$ equals the mod-$r$ abelianisation $F \rightarrow \mathbb{F}_r^n = \tH_1 (F ; \mathbb{F}_r)$.
\item\label{sdef:landing_in_centre} For any $x \in G$ we have $x^{r^k} \in C$.
\item\label{sdef:polynomial_substitute_property} For any homogeneous polynomial $P \in \mathbb{F}_r [a_1, \dots, a_n]$ of degree $r^k$ there is a homomorphism $\Psi_P \colon C \rightarrow \mathbb{F}_r$ of additive groups such that $\Psi_P ( x^{r^k}) = P (\alpha(x))$ for all $x \in G$.
\end{enumerate}
\end{definition}

\begin{remark}\label{rmk:all_fns_are_polys}
Any homogeneous function $\mathbb{F}_r^m \rightarrow \mathbb{F}_r$ can be expressed as a polynomial of sufficiently large degree. One way to see this is via Lagrange interpolation. If $v\in\mathbb{F}_r^m$ is nonzero, then the complement of the ray $\mathbb{F}_r^\times\cdot v$ is a union of finitely many codimension-1 subspaces, and the product of their defining equations is a polynomial which vanishes everywhere except nonzero multiples of $v$. By taking linear combinations of such products, we can obtain a homogeneous polynomial with prescribed values at a given set of representatives for rays in $\mathbb{F}_r^m\setminus\{0\}$.
\end{remark}

Before using polynomial substitutes as witnesses to Proposition \ref{prop:special_quotient}, we need to know that they exist. We affirm this in Propositions \ref{prop:polynomial_substitute_free} and \ref{prop:polynomial_substitute_surface} below.

\begin{proposition}\label{prop:polynomial_substitute_free}
Let $r$ be prime and $k, n \in \mathbb{N}_+$. There exists a degree $r^k$ polynomial substitute for the free group $F_n$ of rank $n$.
\end{proposition}

\begin{proposition}\label{prop:polynomial_substitute_surface}
Let $r$ be an odd prime and $k \in \mathbb{N}_+, g \geq 2$. There exists a degree $r^k$ polynomial substitute for the genus $g$ surface group $\pi_1 \Sigma_g$.
\end{proposition}

Proofs of Propositions \ref{prop:polynomial_substitute_free} and \ref{prop:polynomial_substitute_surface} are delayed until Section \ref{sec:polynomial_substitutes}. Details of their construction will not matter for Proposition \ref{prop:special_quotient}, knowing the conditions of Definition \ref{def:polynomial_substitute} will be enough to complete the proof of \ref{prop:special_quotient}.

\begin{proof}[Proof of Proposition \ref{prop:special_quotient}, assuming Propositions \ref{prop:polynomial_substitute_free} and \ref{prop:polynomial_substitute_surface}:] \

\textbf{Construction of $\rho, G, \alpha$.} Factorise $d = \prod_i r_i$. Let $k_i$ be large enough integers, which we will specify in a moment. By Propositions \ref{prop:polynomial_substitute_free} and \ref{prop:polynomial_substitute_surface}, for each factor $r_i$ there exists a degree-$r_i^{k_i}$ polynomial substitute $(G_i, C_i, \rho_i, \alpha_i)$ for $\pi_1 \Sigma_{g, n}$. Define
\begin{align*}
G =& \prod_i G_i & C =& \prod_i C_i \leq Z (G) \\
\rho =& \prod_i \rho_i \colon \pi_1 \Sigma_{g, n} \rightarrow G & \alpha =& \prod_i \alpha_i \colon G \rightarrow \prod_i \mathbb{F}_{r_i}^m \cong \left( \tfrac{\mathbb{Z}}{d} \right)^m
\end{align*}

Now, let $k_i$ be large enough so that there exists a homogeneous polynomial $P_i \in \mathbb{F}_{r_i} [a_1, \dots, a_m]$ of degree $r_i^{k_i}$ which vanishes nowhere on $\mathbb{F}_{r_i}^m \setminus \{ 0 \}$ (see Remark \ref{rmk:all_fns_are_polys}). By the definition of polynomial substitute, there exists a group homomorphism $\Psi_i \colon C_i \rightarrow \mathbb{F}_{r_i}$ satisfying $\Psi_i \big( x^{r_i^{k_i}} \big) = P_i \big( \alpha_i (x) \big)$ for all $x \in G_i$. Define
\begin{equation*}
\Psi \colon C \rightarrow \tfrac{\mathbb{Z}}{d} \qquad \Psi = \sum_i \prod_{j \neq i} r_j \cdot \Psi_i
\end{equation*}
The homomorphism $\Psi$ has the following property
\begin{equation}\label{eq:hom_psi_non_vanishing}
\text{If } x \in G \setminus \ker \alpha \text{, then some power of } x \text{ lies in } C \setminus \ker \Psi.
\end{equation}
Indeed, suppose $x \in G \setminus \ker \alpha$. Then $\alpha_i (x)$ must be a nonzero vector in $\mathbb{F}_{r_i}^m$ for some factor $r_i$. Write $s = \prod_{j \neq i} r_j^{k_j}$ and $t = \prod_{j \neq i} r_j$. Using the definition of $\Psi_i$, and that $x^{r_i^{k_i} s} \in C$, we obtain congruences
\begin{equation*}
\Psi \left( x^{r_i^{k_i} s} \right) \equiv t \Psi_i \left( x^{r_i^{k_i} s} \right) \equiv t P_i \left( \alpha \left( x^s \right) \right) \equiv t P_i \left( s \cdot \alpha (x) \right) \quad \mod r_i
\end{equation*}
where by $s \cdot \alpha(x)$ we mean scalar-by-vector multiplication. We chose $P_i$ to be nonzero on every nonzero vector in $\mathbb{F}_{r_i}^m$, so $\Psi \left( x^{r_i^{k_i} s} \right)$ is nonzero modulo $r_i$, and thus also modulo $d$. This completes the proof of property (\ref{eq:hom_psi_non_vanishing}).

\textbf{Construction of the representation $V$.} Having $\Psi$ with property (\ref{eq:hom_psi_non_vanishing}) we are ready to construct the representation $V$. Let $\omega$ be a primitive $d$-th root of unity and let $W = \mathbb{Q} [\omega]$ be the cyclotomic field. We make $W$ into a representation of $C$ by letting $c \in C$ act by $\omega^{\Psi(c)}$. Then we induce $V' = \text{Ind}^G_C W$.

Suppose that $x \in G \setminus \ker \alpha$ fixes some $v \in V'$. By the property (\ref{eq:hom_psi_non_vanishing}), $x$ has a power $c \in C \setminus \ker \Psi$, which must also fix $v$. Choosing a set of coset representatives $\Lambda$ for $C$ in $G$ allows us to describe the induced representation explicitly as $V' = \bigoplus_{\lambda \in \Lambda} \lambda \cdot W$. Using this description we can write $v = \sum_{\lambda \in \Lambda} \lambda \cdot v_\lambda$. Since $c$ is central, we have
\begin{equation*}
v = c \cdot v = \sum_{\lambda \in \Lambda} \lambda \cdot (c \cdot v_\lambda) = \sum_{\lambda \in \Lambda} \lambda \cdot \omega^{\Psi(c)} v_\lambda = \omega^{\Psi(c)} v
\end{equation*}
But $\Psi(c) \not\equiv 0 \ ( \!\!\!\! \mod d)$, so $v$ must be the zero vector. Thus we can take $V$ to be any irreducible factor of $V'$.
\end{proof}

Now we have to relate the homology of a cover to the representation theory of the deck group. We will use the following two lemmas. They are classical results of Gasch\"utz \cite{modulare_Darstellungen_Gaschutz54} and Chevalley-Weil \cite{verhalten_indegrale_1gattung_automorphismem_funktionenkorpers_ChevalleyWeilHecke34}. Below $G$ acts on $R$ by conjugation and on $k[G]$ by multiplication.

\begin{lemma}[Gash\"utz]\label{lmm:cover_homology_module_free}
Let $R \triangleleft F_n$ be finite-index, $G = \nicefrac{F_n}{R}$, and $k$ be a field of characteristic 0. Then $\tH_1 (R ; k) \cong k \oplus k[G]^{n-1}$ as a $G$-module.
\end{lemma}

\begin{lemma}[Chevalley-Weil]\label{lmm:cover_homology_module_surface}
Let $R \triangleleft \pi_1 \Sigma_g$ be finite-index, $G = \nicefrac{\pi_1 \Sigma_g}{R}$, and $k$ be a field of characteristic 0. Then $\tH_1 (R ; k) \cong k^2 \oplus \left( k[G] \right)^{2g-2}$ as a $G$-module.
\end{lemma}

We are now ready to prove the properness of $d$-primitive homology. The key property of $(G, \rho, \alpha, V)$ is that $V$ is a representation of $\pi_1 \Sigma_{g, n}$ which factors through a finite group and where $d$-primitive elements have no nontrivial fixed vectors.

\begin{proof}[Proof of Proposition \ref{prop:dprimitive_homology} assuming Proposition \ref{prop:special_quotient}:]
Consider the group $\pi_1\Sigma_{g,n}$ and a square-free integer $d$. Let $(G,\rho,\alpha,V)$ be the structure provided by Proposition \ref{prop:special_quotient}. Set $R = \ker \rho \leq \pi_1 \Sigma_{g, n}$. By Lemmas \ref{lmm:cover_homology_module_free} and \ref{lmm:cover_homology_module_surface}, $\tH_1 (R ; \mathbb{Q})$ contains $\mathbb{Q}[G]$ as a direct summand, and hence it also contains $V$.

Take any $d$-primitive $\gamma\in\pi_1\Sigma_{g,n}$, and suppose that $\gamma^m\in R$. We want to prove that projection of the vector $[\gamma^m]\in\tH_1(R;\mathbb{Q})$ onto sub-$G$-representation $V$ is zero. To this end, let us examine the action of $\rho(\gamma)\in G$ on it. On the one hand, $\rho(\gamma)\curvearrowright\tH_1(R;\mathbb{Q})$ corresponds to conjugation by $\gamma$, so it fixes $[\gamma^m]$. On the other hand, $\gamma$ is $d$-primitive, so $\rho(\gamma)\in G\setminus\ker\alpha$, and from the defining property of $V$ it follows that projections of invariants of $\rho(\gamma)$ onto $V$ are zero.

The above argument holds for any generator of $d$-primitive homology, so intersection of $\tH_1^{d\text{-prim}}(R;\mathbb{Q})$ with $V$ is the zero subspace. In particular, $\tH_1^{d\text{-prim}}(R;\mathbb{Q})\neq\tH_1(R;\mathbb{Q})$. The cover $\widetilde{\Sigma}\rightarrow\Sigma$ corresponding to $R\leq\pi_1\Sigma_{g, n}$ is a witness to Proposition \ref{prop:dprimitive_homology}.
\end{proof}

\begin{remark}
We argued that, when no simple closed curve fixes a nonzero vector, then the irreducible representation of the deck group does not appear in $\tH_1^\textnormal{scc}$. It was noted in \cite{finite_covers_of_graphs_FarbHensel16} that the converse also holds.
\end{remark}

\section{Existence of polynomial substitutes}\label{sec:polynomial_substitutes}

This section is devoted to constructing polynomial substitutes (Definition \ref{def:polynomial_substitute}) for free and surface groups. This is the last piece we need to complete the proof of Proposition \ref{prop:special_quotient} and hence of all the theorems stated in Section \ref{sec:introduction}.

In Section \ref{ssec:pf_polynomial_substitute_free} we prove Proposition \ref{prop:polynomial_substitute_free} -- the existence of polynomial substitutes for free groups. This completes the proof of Proposition \ref{prop:special_quotient} in the punctured case $n \geq 1$. The construction relies on the Magnus embedding of a free group into a group of units of a certain associative algebra.

In Section \ref{ssec:pf_polynomial_substitute_surface} we prove Proposition \ref{prop:polynomial_substitute_surface} -- the existence of polynomial substitutes for surface groups. This will complete the proof of \ref{prop:special_quotient} in the unpunctured case $n = 0$. The idea is to mimic the simpler proof of \ref{prop:polynomial_substitute_free}, but work with algebras which allow us to preserve the surface relation.

\subsection{Free groups and proof of Proposition \ref{prop:polynomial_substitute_free}}\label{ssec:pf_polynomial_substitute_free}

In this section we construct polynomial substitutes for free groups and prove Proposition \ref{prop:polynomial_substitute_free}. This will complete the proof of Proposition \ref{prop:special_quotient} in the punctured case $n \geq 1$. The group $G$ in Definition \ref{def:polynomial_substitute} will be a certain quotient of the group of units of a free associative algebra.

\paragraph{Magnus embedding.} Let $F_n = \langle s_1, \dots, s_n \rangle$ be the free group of rank $n$, and $\mathbb{Z} \langle\!\langle X_1, \dots, X_n \rangle\!\rangle$ be the $(X_1, \dots, X_n)$-adic completion of the free associative $\mathbb{Z}$-algebra on $n$ generators. Explicitly, it consists of formal power series in non-commutative variables $X_1, \dots, X_n$ with integer coefficients. We have the following

\begin{lemma}[Magnus embedding]\label{lmm:Magnus_embedding}
There is a homomorphism $\overline{\rho}$ from $F_n$ to the multiplicative group of units $\mathbb{Z} \langle\!\langle X_1, \dots, X_n \rangle\!\rangle^\times$ which sends $s_i$ to $1 + X_i$.
\end{lemma}

The only nontrivial part of the proof is verifying that $1 + X_i$ is actually a unit. Its multiplicative inverse is given by $1 + \sum_{k \geq 1} (-1)^k X_i^k$. More generally, any series with free term $\pm 1$ is a unit, with inverse given by a formal geometric power series. Although we will not need this, we remark that the kernel of $\overline{\rho}$ is in fact trivial \cite{combinatorial_gptheo_MagnusKarrassSolitar04}.

\paragraph{Construction of a polynomial substitute for a free group.} Define
\begin{equation*}
\mathcal{F} = \frac{\mathbb{Z} \langle\!\langle X_1, \dots, X_n \rangle\!\rangle}{(r) + (X_1, \dots, X_n)^{r^k+1}}
\end{equation*}
Explicitly, $\mathcal{F}$ consists of polynomials in non-commuting variables $X_1, \dots, X_n$ with $\mathbb{F}_r$-coefficients, where every product of more than $r^k$ variables vanishes. This is a finite $\mathbb{F}_r$-algebra.

We define $G$ to be the set of polynomials with constant term equal to one
\begin{equation}\label{eq:G_F}
G = 1 + (X_1, \dots, X_n) = \{ P \in \mathcal{F} \ |\ P(0, \dots, 0) = 1 \}
\end{equation}
This is a subgroup of the multiplicative group of units $\mathcal{F}^\times$. Note that $\mathcal{F}^\times \cong \mathbb{F}_r^\times \times G$, and that the order of $G$ is a power of $r$.

We define $\rho$ as the composition of the quotient $\mathbb{Z} \langle\!\langle X_1, \dots, X_n \rangle\!\rangle \rightarrow \mathcal{F}$ with $\overline{\rho}$. Explicitly, it sends $s_i$ to $1 + X_i \in G$.

The group $G$ surjects onto $\mathbb{F}_r^n$ via forgetting degree two and higher and subtracting the constant term
\begin{equation}\label{eq:abelianisation}
\alpha \colon 1 + a_1X_1 + \dots + a_nX_n + P \mapsto \big( a_i \big)_{1 \leq i \leq n} \in \mathbb{F}_r^n
\end{equation}
where $P\in(X_1,\dots,X_n)^2$. The maps $\rho$ and $\alpha$ are compatible, in the sense that $\alpha \circ \rho$ is the mod-$r$ abelianisation $F_n \rightarrow \tH_1 (F_n ; \mathbb{F}_r)$. This verifies condition \ref*{sdef:compatibility_with_abelianisation} of definition \ref{def:polynomial_substitute}.

Inside $\mathcal{F}$ we also have
\begin{equation}\label{eq:C_F}
C = 1 + (X_1, \dots, X_n)^{r^k} = \left\{ 1 + P \ \big|\ P \in \mathbb{F}_r[X_1,\dots,X_n] \text{ homogeneous of degree } r^k \right\}
\end{equation}
All terms of degree at least $r^k+1$ vanish in $\mathcal{F}$, so $C$ is central in $\mathcal{F}$ as a subalgebra, so it is central in $G$ as a subgroup. Let us describe the group structure of $C$. If $P_1,P_2\in\mathbb{F}_r[X_1,\dots,X_n]$ are homogeneous of degree $r^k$, then $(1+P_1)(1+P_2)=1+P_1+P_2+P_1P_2=1+P_1+P_2$ holds in $\mathcal{F}$. Therefore we have a group isomorphism
\begin{align}\label{eq:C_iso}\begin{split}
\varphi\colon C&\cong\left\{P\in\mathbb{F}_r[X_1,\dots,X_n]\ |\ P \text{ homogeneous of degree }r^k\right\}\\
\varphi\colon 1+P&\mapsto P
\end{split}\end{align}
between $C$ (with its multiplicative group structure inherited from $\mathcal{F}$ and $G$) and the linear space of homogeneous polynomials of degree $r^k$. Note that this space has dimension $n^{r^k}$ and a canonical basis consisting of monomials of degree $r^k$.

\paragraph{Verification of the polynomial substitute property.} It remains to check the conditions \ref*{sdef:landing_in_centre} and \ref*{sdef:polynomial_substitute_property} of Definition \ref{def:polynomial_substitute}. Consider $x=1+\sum_{i=1}^na_iX_i+P$ for some $P\in(X_1,\dots,X_n)^2$. Raising it to the $r^k$-th power in $\mathcal{F}$ gives
\begin{align}\label{eq:power_monomials}\begin{split}
x^{r^k}&=\left(1+\sum_{i=1}^na_iX_i+P\right)^{r^k}=\\
&=1+\left(\sum_{i=1}^na_iX_i+P\right)^{r^k}=\\
&=1+\left(\sum_{i=1}^na_iX_i\right)^{r^k}=\\
&=1+\sum_{i_1, \dots, i_{r^k}}\prod_{l=1}^{r^k}a_{i_l}\cdot\prod_{l=1}^{r^k} X_{i_l}
\end{split}\end{align}
The result belongs to $C$, which confirms (b).

Let us view equation (\ref{eq:power_monomials}) through the isomorphism from equation (\ref{eq:C_iso}). By examining the RHS of (\ref{eq:power_monomials}), and using monomials in $X_1,\dots,X_n$ as the basis, we see that $\varphi(x^{r^k})=\alpha(x)^{\otimes r^k}$, with $\alpha$ defined in (\ref{eq:abelianisation}) (resembling the Veronese mapping). In particular, every monomial in $a_1,\dots,a_n$ of degree $r^k$ appears as one of the coordinates of $\varphi(x^{r^k})$. Since every polynomial in $a_1,\dots,a_n$ is a linear combination of monomials, we can obtain any homogeneous degree-$r^k$ polynomial in variables $a_1,\dots,a_n$ by taking linear combinations of coordinate projections of $\varphi(x^{r^k})$. This confirms the condition (c).

\subsection{Surface groups and proof of Proposition \ref{prop:polynomial_substitute_surface}}\label{ssec:pf_polynomial_substitute_surface}

In this section we construct polynomial substitutes for surface groups and prove Proposition \ref{prop:polynomial_substitute_surface}. This will complete the proof of Proposition \ref{prop:special_quotient} in the non-punctured case $n = 0$.

Our proof strategy will be to mimic the construction for free groups from Section \ref{ssec:pf_polynomial_substitute_free}. We will replace the vanilla Magnus embedding \ref{lmm:Magnus_embedding} with homomorphisms coming from two other algebras, each of which preserves the relation in surface groups. In Section \ref{sssec:RAAG_algebra} we showcase the algebra $\overline{\mathcal{M}}$, which will generate \emph{mixed} monomials -- those which contain factors from more than one standard pair of generators (e.g. $a_1 b_1 a_2$ or $b_i b_j$ for $i \neq j$). In Section \ref{sssec:quaternion_algebra} we construct the algebra $\overline{\mathcal{H}}$, which will supply \emph{non-mixed} monomials (e.g. $a_1^u b_1^v$). We combine them and finalise the proof of \ref{prop:polynomial_substitute_surface} in Section \ref{sssec:assembling_ps_surface}.

We use the standard presentation $\pi_1 \Sigma_g = \langle s_1, t_1, \dots, s_g, t_g \ |\ \prod_i [s_i, t_i] = 1 \rangle$, and write $a_i (s_j) = b_i (t_j) = \delta_{i j}, a_i (t_j) = b_i (s_j) = 0$ for the standard basis of $\tH^1 (\Sigma_g ; \mathbb{F}_r)$.

\subsubsection{RAAG algebra and mixed terms}\label{sssec:RAAG_algebra}

In this section we exhibit the algebra $\mathcal{M}$ which admits an interesting mapping $\pi_1 \Sigma_g \rightarrow \mathcal{M}^\times$.

\paragraph{Algebra $\overline{\mathcal{M}}$.} Define
\begin{equation*}
\overline{\mathcal{M}} = \frac{\mathbb{Z} \langle\!\langle X_1, Y_1, \dots, X_g, Y_g \rangle\!\rangle}{(X_iY_i, Y_iX_i)_{1 \leq i \leq g}}
\end{equation*}
that is, the associative algebra of formal power series in non-commutative variables $X_i, Y_i$ with integer coefficients, where we ignore all terms with adjacent $X_i, Y_i$. To be clear, we retain $X_iY_j$ and $Y_j X_i$ when $i \neq j$.

\begin{observation}
There exists a homomorphism $\overline{\rho} \colon \pi_1 \Sigma_g \rightarrow \overline{\mathcal{M}}^\times$ which sends $s_i \mapsto 1 + X_i$ and $t_i \mapsto 1 + Y_i$.
\end{observation}

\begin{proof}
The symbols $X_i, Y_i$ commute in the algebra $\overline{\mathcal{M}}$, so $1 + X_i, 1 + Y_i$ commute in the multiplicative group $\overline{\mathcal{M}}^\times$. In particular the surface relation is mapped to 1.
\end{proof}

\medskip

Analogously as before, define
\begin{equation*}
\mathcal{M} = \frac{\overline{\mathcal{M}}}{(r) + (X_i, Y_i)^{r^k+1}}
\end{equation*}
This is a finite $\mathbb{F}_r$-algebra. We define $G_\mathcal{M}, \rho_\mathcal{M}, \alpha_\mathcal{M}, C_\mathcal{M}$ analogously as before: $G_\mathcal{M}$ is the subgroup of series with trailing term 1 (see \ref{eq:G_F}), $\rho$ is the composition of the quotient $\overline{\mathcal{M}} \rightarrow \mathcal{M}$ with $\overline{\rho}$, $\alpha_\mathcal{M}$ comes from forgetting quadratic and higher terms (see \ref{eq:abelianisation}) and $C_\mathcal{M}$ contains series which are 1 plus terms of degree $r^k$ (see \ref{eq:C_F}). Note this is compatible with abelianisation (condition \ref*{sdef:compatibility_with_abelianisation} of definition \ref{def:polynomial_substitute}), in the sense that $\alpha_\mathcal{M} \circ \rho_\mathcal{M}$ is the canonical map $\pi_1 \Sigma_g \rightarrow \tH_1 (\Sigma_g ; \mathbb{F}_r)$. Every $r^k$-th power lands in $C_\mathcal{M}$, so $(G, C, \rho, \alpha)$ satisfy condition \ref{def:polynomial_substitute}\ref*{sdef:landing_in_centre}.

\paragraph{Analogue of the polynomial substitute property.} The group $G_\mathcal{M}$ satisfies a weaker analogue of the property \ref{def:polynomial_substitute}\ref*{sdef:polynomial_substitute_property}. We say a monomial $\prod_j a_j^{u_j} b_j^{v_j}$ is \emph{mixed} if $u_i + v_i \geq 1$ for at least two different $i$; in other words, it contains, with nonzero exponents, two factors representing different standard generator pairs.

\begin{property}\label{pty:polyhom_mixed_surface}
Let $P$ be a degree $r^k$ mixed monomial in $a_1, b_1, \dots, a_g, b_g$. There exists a group homomorphism $\Psi_P \colon C_\mathcal{M} \rightarrow \mathbb{F}_r$, such that $\Psi_P \big( x^{r^k} \big) = P \big( \alpha_\mathcal{M} (x) \big)$ for all $x \in G_\mathcal{M}$.
\end{property}

We will remediate this weakening later by using complementary Property \ref{pty:polyhom_nonmixed_surface}.

\begin{proof}[Proof of Property \ref{pty:polyhom_mixed_surface}:]
Write
\begin{equation*}
P = \prod_{i=1}^g a_i^{u_i} b_i^{v_i}
\end{equation*}
Since $P$ is mixed, either $P = \prod_i a_i^{u_i}$ or $P = \prod_i b_i^{v_i}$ or there exist $i_0 \neq j_0$ with $u_{i_0}, v_{j_0} > 0$. In the first two cases set $Q = \prod_i X_i^{u_i}$ or $Q = \prod_i Y_i^{v_i}$ respectively. In the third case set
\begin{equation*}
Q = \prod_{i \neq i_0} X_i^{u_i} \cdot X_{i_0}^{u_{i_0}} \cdot Y_{j_0}^{v_{j_0}} \cdot \prod_{j \neq j_0} Y_j^{v_j}
\end{equation*}
In either case, such $Q$ has no adjacent $X_i Y_i$ or $Y_i X_i$.

Recall that the group $C_\mathcal{M}$ is isomorphic to the $\mathbb{F}_r$-vector space with the basis $\{1+R\}_R$, where $R$ ranges over monomials of total degree $r^k$ which do not have $X_i,Y_i$ as adjacent factors. We argued that $Q$ does not have neighbouring $X_i,Y_i$, which means that $1+Q$ is an element of this basis. Let $\Psi_P$ be the coordinate projection $C_\mathcal{M} \rightarrow \mathbb{F}_r$ onto the 1-dimensional subspace $1+Q\mathbb{F}_r$ spanned by $1+Q$. The exponents of $Q$ match the corresponding exponents of $P$, so $\Psi_P$ satisfies $\Psi_P (x^{r^k}) = P \big( \alpha(x) \big)$ for all $x \in G_\mathcal{M}$ (compare equation (\ref{eq:power_monomials})).
\end{proof}

\subsubsection{Quaternion algebra and products of intersecting generators}\label{sssec:quaternion_algebra}

Here we construct the algebra $\mathcal{H}$ and an interesting mapping $\pi_1 \Sigma_2 \rightarrow \mathcal{H}^\times$.

\paragraph{Algebra $\overline{\mathcal{H}}$ and a homomorphism from the genus two surface group.} Consider the commutative ring $\mathbb{Z} [\![ A, B ]\!]$ and take its quaternion algebra
\begin{equation*}
\overline{\mathcal{H}} = \mathbb{Z} [\![ A, B ]\!] [i, j, k]
\end{equation*}
The variables $A, B$ commute with each other and with $i, j, k$, while $i, j, k$ square to $-1$ and pairwise anti-commute. Note that $\overline{\mathcal{H}}$ can also be considered as the ring of formal power series over the skew-comutative ring $\mathbb{Z} [i, j, k]$.

\medskip

First we establish the existence of an interesting homomorphism from a surface group of genus two into its group of units.

\begin{lemma}\label{lmm:hom_to_quat}
There exists a group homomorphism $\overline{\tau} \colon \pi_1 \Sigma_2 \rightarrow \overline{\mathcal{H}}^\times$ satisfying
\begin{equation}\label{eq:quathom_linearisation}
\overline{\tau} (s_1) \equiv 1 + A i, \quad \overline{\tau} (t_1) \equiv 1 + B j, \quad \overline{\tau} (s_2) \equiv 1 + A j, \quad \overline{\tau} (t_2) \equiv 1 + B i \qquad \text{mod } (A, B)^2
\end{equation}
\end{lemma}

\begin{proof}
We make an ansatz
\begin{align}\label{eq:quat_ansatz}\begin{split}
\overline{\tau}(s_1) = 1 + A i + E k,& \quad \overline{\tau}(t_1) = 1 + B j, \quad \overline{\tau}(s_2) = 1 + A j - E k, \quad \overline{\tau}(t_2) = 1 + B i \\
&\text{where} \qquad E = \tfrac{1}{2} \left( \big( 1 + 4 A (B - A) \big)^\frac{1}{2} - 1\right)
\end{split}\end{align}
It can be seen from the explicit Taylor expansion of $\sqrt{1 + x}$ that $E$ is a series with integer coefficients and contains only terms of degree two or higher. This means that the ansatz (\ref{eq:quat_ansatz}) is of the desired form (\ref{eq:quathom_linearisation}). To show that it extends to a group homomorphism, it suffices to verify that the images of generators in $\overline{\mathcal{H}}^\times$ satisfy the defining relation of $\pi_1\Sigma_2$. This boils down to an involved but straightforward calculation. We will use the fact that $E$ is a solution to the equation
\begin{equation}\label{eq:E_equation}
A B - E = A^2 + E^2
\end{equation}

Denote group-theoretic and algebra-theoretic commutators as $[\bullet, \bullet]_G$ and $[\bullet, \bullet]_A$ respectively. Under \ref{eq:quat_ansatz} we have
\begin{align}
[ \overline{\tau}(s_1), \overline{\tau}(t_1) ]_G & = (1 + A i + E k) (1 + B j) (1 + A i + E k)^{-1} (1 + B j)^{-1} = \nonumber \\
& = 1 + [A i + E k, B j]_A (1 + A^2 + E^2)^{-1} (1 + B^2)^{-1} (1 - A i - E k) (1 - B j) = \nonumber \\
& = 1 + 2 (1 + A^2 + E^2)^{-1} (1 + B^2)^{-1} B \times \nonumber \\
& \qquad\qquad \times \Big( - (A^2 + E^2) B + (A B - E) i - (A^2 + E^2) j + (A + B E) k \Big) = \nonumber \\
& = 1 + 2 (1 + w)^{-1} (1 + B^2)^{-1} B \big( - w B + w i - w j + (A + B E) k \big) \label{eq:tau_commutator}
\end{align}

where $w = AB - E$ (compare (\ref{eq:E_equation})). Let $\theta$ be the automorphism of $\overline{\mathcal{H}}$ induced by the automorphism of $Q_8$ sending $i \mapsto j, j \mapsto i, k \mapsto -k$. We chose the images of generators which are interchanged by $\theta$, so
\begin{align}\label{eq:tau_commutator2}\begin{split}
[ \overline{\tau}(s_2), \overline{\tau}(t_2) ]_G & = [ \theta( \overline{\tau}(s_1) ), \theta( \overline{\tau}(t_1) ) ]_G = \theta \left( [ \overline{\tau} (s_1), \overline{\tau} (t_1) ]_G \right) = \\
& = 1 + 2 (1 + w)^{-1} (1 + B^2)^{-1} B \big( - w B + w j - w i - (A + B E) k \big)
\end{split}\end{align}

Expressions \ref{eq:tau_commutator} and \ref{eq:tau_commutator2} are quaternionic conjugates, so their product is
\begin{align*}
[ \overline{\tau} (s_1), & \overline{\tau} (t_1) ]_G [ \overline{\tau} (s_2), \overline{\tau} (t_2) ]_G = \\
& = \left( 1 - 2 (1 + w)^{-1} (1 + B^2)^{-1} B^2 w \right)^2 + 4 (1 + w)^{-2} (1 + B^2)^{-2} B^2 (2 w^2 + (A + B E)^2) = \\
& = 1 + 4 (1 + w)^{-2} (1 + B^2)^{-2} B^2 \Big( - (1 + w) (1 + B^2) w + B^2 w^2 + 2 w^2 + (A + B E)^2 \Big)
\end{align*}
This equals 1, provided that the expression
\begin{equation*}
- (1 + w) (1 + B^2) w + B^2 w^2 + 2 w^2 + (A + B E)^2
\end{equation*}
vanishes. Expanding and substituting back $w = A B - E$, this becomes
\begin{equation*}
(1 + B^2) ( -A B + E + A^2 + E^2)
\end{equation*}
which is zero by the equation (\ref{eq:E_equation}). Therefore $[ \overline{\tau} (s_1), \overline{\tau} (t_1) ]_G [ \overline{\tau} (s_2), \overline{\tau} (t_2) ]_G = 1$, so $\overline{\tau}$ is a well-defined group homomorphism.
\end{proof}

\paragraph{Parts of the polynomial substitute.} Let us describe how to build parts of the polynomial substitute from $\overline{\mathcal{H}}$. As before, for an odd prime $r$, we reduce modulo $r$ and degree $r^k+1$ by defining the finite $\mathbb{F}_r$-algebra
\begin{equation*}
\mathcal{H} = \frac{\overline{\mathcal{H}}}{(r) + (A, B)^{r^k+1}}
\end{equation*}
Units $\mathcal{H}^\times$ contain $G_\mathcal{H} = 1 + \mathbb{F}_r \{ Ai, Aj, Bi, Bj \} + (A, B)^2$ and its central subgroup $C_\mathcal{H} = 1 + (A, B)^{r^k}$. Explicitly
\begin{align*}
G_\mathcal{H} =& \Bigg\{ 1 + a_1 A i + b_1 B j + a_2 A j + b_2 B i + \!\! \sum_{\tiny \begin{matrix} u + v \geq 2 \\ l \in \{ 1, i, j, k \} \end{matrix}} c^{(l)}_{u, v} A^u B^v l \Bigg| a_1, b_1, a_2, b_2, c^{(l)}_{u, v} \in \mathbb{F}_r \Bigg\} \\
C_\mathcal{H} =& \Bigg\{ 1 + \! \sum_{\tiny \begin{matrix} u + v = r^k \\ l \in \{ 1, i, j, k \} \end{matrix}} c_{u, v}^{(l)} A^u B^v l \ \Bigg| \ c_{u, v}^{(l)} \in \mathbb{F}_r \Bigg\} \cong \mathbb{F}_r^{4(r^k+1)}
\end{align*}
We define
\begin{equation}\label{eq:tau}
\tau \colon \pi_1 \Sigma_2 \rightarrow G_\mathcal{H} \qquad \tau = q_\mathcal{H} \circ \overline{\tau}
\end{equation}
where $q \colon \overline{\mathcal{H}} \rightarrow \mathcal{H}$ is the quotient map. Forgetting quadratic and higher terms (and constant 1) is a homomorphism $G_\mathcal{H} \rightarrow \mathbb{F}_r^4$ given by
\begin{equation}\label{eq:alpha_H}
\alpha_\mathcal{H} \colon 1 + a_1 A i + b_1 B j + a_2 A j + b_2 B i + P \ \mapsto \ \big( a_1, b_1, a_2, b_2 \big)^\top \in \mathbb{F}_r^4
\end{equation}
for arbitrary $P \in (A,B)^2$. Note that again $\alpha_\mathcal{H} \circ \tau$ is the standard map $\pi_1 \Sigma_2 \rightarrow \tH_1 (\Sigma_2 ; \mathbb{F}_r)$, and $a_l, b_l$ agree with the basis of $\tH^1 (\Sigma_2; \mathbb{F}_r)$ defined in Section \ref{ssec:pf_polynomial_substitute_surface}. Similarly as before, the tuple $(G_\mathcal{H}, C_\mathcal{H}, \tau, \alpha_\mathcal{H})$ satisfies conditions \ref*{sdef:compatibility_with_abelianisation} and \ref*{sdef:landing_in_centre} of Definition \ref{def:polynomial_substitute}. Shortly we will also show a weaker analogue of condition \ref*{sdef:polynomial_substitute_property} from Definition \ref{def:polynomial_substitute}.

\paragraph{Analogue of the polynomial substitute property.} Now we show that $r^k$-th power produces all non-mixed terms from the linearisation. Recall that non-mixed monomials are precisely those of the form $a_i^u b_i^v$, where $u + v = r^k$ and $1 \leq i \leq g$.

\begin{property}\label{pty:polyhom_nonmixed_surface}
Take nonnegative $u, v$ summing to $r^k$. There exists a group homomorphism $\Psi_{a_1^u b_1^v} \colon C_\mathcal{H} \rightarrow \mathbb{F}_r$ and a polynomial $R_{a_1^u b_1^v} \in \mathbb{F}_r [ a_1, b_1, a_2, b_2]$ consisting only of mixed terms, such that for any $x \in G_\mathcal{H}$ we have $\Psi_{a_1^u b_1^v} \big( x^{r^k} \big) = a_1(x)^u b_1(x)^v + R_{a_1^u b_1^v} \big( \alpha_\mathcal{H} (x) \big)$, where $a_1, b_1$ are the coefficients in the mod-$r$ abelianisation defined in equation (\ref{eq:alpha_H}).
\end{property}

This is complementary to Property \ref{pty:polyhom_mixed_surface} of the RAAG algebra $\overline{\mathcal{M}}$. Later we will combine them to satisfy the polynomial substitute condition \ref{def:polynomial_substitute}\ref*{sdef:polynomial_substitute_property}. Proof of Property \ref{pty:polyhom_nonmixed_surface} is the first place where we require the assumption that $r$ is odd.

\begin{proof}[Proof of Property \ref{pty:polyhom_nonmixed_surface}:]
WLOG assume that $u$ is at most $r-1$. We can do this, because whenever $u\geq r$, the values of $a^u b^v$ and $a^{u-(r-1)} b^{v+r-1}$ agree for any $a, b \in \mathbb{F}_r$.

Take $x \in G_\mathcal{H}$ of the form $g = 1 + a_1Ai + b_1Bj + a_2Aj + b_2Bi + (\deg \geq 2)$. Then the image of $x$ under $\alpha_\mathcal{H}$ is the vector ${(a_1, b_1, a_2, b_2)^\top \in \mathbb{F}_r^4}$. Moreover,
\begin{align}
x^{r^k} &= 1 + \big( (a_1A + b_2B)i + (a_2A + b_1B)j \big)^{r^k} = \nonumber \\
&= 1 + \big[ -(a_1A + b_2B)^2 - (a_2A + b_1B)^2 \big]^\frac{r^k-1}{2} \cdot \big[ (a_1A + b_2B)i + (a_2A + b_1B)j \big] \label{eq:power_in_H}
\end{align}
Let us denote by $\approx$ the equality up to mixed terms. Note that arbitrary multiple of $a_1 a_2$, $a_1 b_2$, $b_1 a_2$ or $b_1 b_2$ is necessarily mixed. The terms of degree $r^k$ in the expression (\ref{eq:power_in_H}) are $(-1)^\frac{r^k-1}{2}$ times
\begin{align}
\big[ (a_1A & + b_2B)^2 + (a_2A + b_1B)^2 \big]^\frac{r^k-1}{2} \cdot \big[ (a_1A + b_2B)i + (a_2A + b_1B)j \big] \approx \nonumber \\
\approx & \big[ a_1^2 A^2 + b_1^2 B^2 \big]^\frac{r^k-1}{2} \cdot ( a_1 A i + b_1 B j ) + \big[ a_2^2 A^2 + b_2^2 B^2 \big]^\frac{r^k-1}{2} \cdot ( b_2 B i + a_2 A j ) = \nonumber \\
= & \sum_{w=0}^{\frac{r^k-1}{2}} \begin{pmatrix} \frac{r^k-1}{2} \\ w \end{pmatrix} a_1^{2w} b_1^{r^k-1-2w} A^{2w} B^{r^k-1-2w} \cdot (a_1 A i + b_1 B j) + \nonumber \\
& + \sum_{w=0}^{\frac{r^k-1}{2}} \begin{pmatrix} \frac{r^k-1}{2} \\ w \end{pmatrix} a_2^{2w} b_2^{r^k-1-2w} A^{2w} B^{r^k-1-2w} \cdot (b_2 B i + a_2 A j) \label{eq:nonmixed_coeffs}
\end{align}

Suppose $u$ is odd and $v$ is even; the other case is analogous. Let $u = 2 u_0 + 1, v = 2 v_0$. The coefficient of $A^u B^v i$ in \ref{eq:nonmixed_coeffs} equals
\begin{equation}\label{eq:chosen_mixed_coeff}
\begin{pmatrix} \frac{r^k-1}{2} \\ u_0 \end{pmatrix} a_1^u b_1^v
\end{equation}
We need the binomial coefficient in this expression to be nonzero modulo $r$. Recall Kummer's theorem
\begin{theorem*}[Kummer]
Let $r$ be a prime number. The exponent of the largest power of $r$ dividing the binomial coefficient $\begin{pmatrix}n\\m\end{pmatrix}$ is equal
\begin{equation*}
\frac{S_r(m)+S_r(n-m)-S_r(n)}{r-1}
\end{equation*}
where $S_r$ is the sum of digits in base-$r$ expansion.
\end{theorem*}
We assumed that $u\leq r-1$, so $u_0\leq\tfrac{r-1}{2}$. This means that in base $r$ we have expansions
\begin{align*}
u_0&=u_0\cdot r^0\\
\frac{r^k-1}{2}&=\frac{r-1}{2}\cdot r^{k-1}+\dots+\frac{r-1}{2}\cdot r^1+\frac{r-1}{2}\cdot r^0\\
v_0&=\frac{r-1}{2}\cdot r^{k-1}+\dots+\frac{r-1}{2}\cdot r^1+\left(\frac{r-1}{2}-u_0\right)\cdot r^0
\end{align*}
In particular, $S_r(u_0)+S_r(v_0)=S_r\left(\tfrac{r^k-1}{2}\right)$, so the binomial coefficient in (\ref{eq:chosen_mixed_coeff}) is not divisible by $r$.

Let $\Psi'$ be the coordinate projection $C_\mathcal{H} \rightarrow \mathbb{F}_r$ onto $A^u B^v i$. We have established that
\begin{equation*}
\Psi' \big( x^{r^k} \big) = c \cdot a_1(x)^u b_1(x)^v + R
\end{equation*}
where $c \in \mathbb{F}_r^\times$, and $R$ is a polynomial in $a_1(x),b_1(x),a_2(x),b_2(x)$ involving only mixed terms. Then the homomorphism $\Psi_{a_1^u b_1^v} \stackrel{\text{def}}{=} c^{-1} \Psi'$ witnesses Property \ref{pty:polyhom_nonmixed_surface}.
\end{proof}

\subsubsection{Assembling the polynomial substitute for a surface group}\label{sssec:assembling_ps_surface}

Now we complete the proof of Proposition \ref{prop:polynomial_substitute_surface}. Recall

\begin{proposition*}[restatement of \ref{prop:polynomial_substitute_surface}]
If $r$ is an odd prime and $k \in \mathbb{N}_+, g \geq 2$, then there exists a degree $r^k$ polynomial substitute for the genus $g$ surface group $\pi_1 \Sigma_g$.
\end{proposition*}

Recall that we need to find an $r$-group $G$, central $C \leq G$, and homomorphisms $\rho \colon \pi_1 \Sigma_g \rightarrow G$, $\alpha \colon G \rightarrow \mathbb{F}_r^{2g}$ such that the following hold.
\begin{enumerate}
\item The composition $\alpha \circ \rho \colon \pi_1 \Sigma_g \rightarrow \mathbb{F}_r^{2g} = \tH_1 (\Sigma_g ; \mathbb{F}_r)$ is the Hurewicz map.
\item For any $x \in G$ we have $x^{r^k} \in C$.
\item For any homogeneous $P \in \mathbb{F}_r [a_1, b_1, \dots, a_g, b_g]$ of degree $r^k$ there is a homomorphism $\Psi_P \colon C \rightarrow \mathbb{F}_r$ with $\Psi_P \big( x^{r^k} \big) = P \big( \alpha (x) \big)$.
\end{enumerate}

\medskip

As parts of the construction we will use $(G_\mathcal{M}, C_\mathcal{M}, \rho_\mathcal{M}, \alpha_\mathcal{M})$ from Section \ref{sssec:RAAG_algebra} and $(G_\mathcal{H}, C_\mathcal{H}, \tau, \alpha_\mathcal{H})$ from Section \ref{sssec:quaternion_algebra}. We will assemble them according to the following diagram:
\begin{equation*}\begin{tikzcd}[row sep=8pt]
\pi_1 \Sigma_g \arrow[rr, "\rho_\mathcal{M}"] \arrow[rd, "\prod_i h_i"'] && G_\mathcal{M} \arrow[r, bend left, "x \mapsto x^{r^k}"] \arrow[d, phantom, "\times" description] & C_\mathcal{M} \arrow[l, hook'] \arrow[r, "\Psi_0"] \arrow[d, phantom, "\times" description] & \mathbb{F}_r\\
& \left( \pi_1 \Sigma_2 \right)^g \arrow[r, "\tau^{\times g}"'] & G_\mathcal{H}^g \arrow[r, bend right, "x \mapsto x^{r^k}"'] & C_\mathcal{H}^g \arrow[l, hook'] \arrow[ru, "\sum_i \Psi_i"'] & \\
& \rho \arrow[u, phantom, "\underbrace{\qquad\qquad\qquad\qquad\qquad}"] & G' \arrow[u, phantom, sloped, "="] & C' \arrow[u, phantom, sloped, "="] & \ \arrow[lu, phantom, "\underbrace{\qquad\quad}_{\Psi_P}"]
\end{tikzcd}\end{equation*}
When we construct $\Psi_P$, the top row will supply mixed terms, while the bottom row will provide the remaining non-mixed terms one-by-one.

\begin{proof}[Proof of Proposition \ref{prop:polynomial_substitute_surface}] \

\textbf{Step 1: construction.} Define
\begin{equation*}
G' = G_\mathcal{M} \times G_\mathcal{H}^g
\end{equation*}
and let $f_\mathcal{M}, \{ f_{\mathcal{H}, i} \}_{1 \leq i \leq g}$ be the projections of $G'$ onto each factor.

We need to build a homomorphism $\rho \colon \pi_1 \Sigma_g \rightarrow G'$. For $1 \leq i \leq g$ let $h_i$ be the family of homomorphisms $\pi_1 \Sigma_g \rightarrow \pi_1 \Sigma_2$ forgetting all but two generator pairs, given by
\begin{equation*}
h_i : \left\{ \begin{matrix}
s_{i+a} & \mapsto & s_a & \text{if } a \in \{ 0, 1 \} \\
t_{i+a} & \mapsto & t_a & \text{if } a \in \{ 0, 1 \} \\
s_j, t_j & \mapsto & 1 & \text{if } j \notin \{ i, i+1 \}
\end{matrix} \right.
\end{equation*}
where indices are taken modulo $g$. Geometrically these maps come from collapsing $g-2$ handles of $\Sigma_g$ to points. Define $\rho = \rho_\mathcal{M} \times \prod_{i=1}^g \tau \circ h_i$, where $\tau$ is the homomorphism $\pi_1 \Sigma_2 \rightarrow G_\mathcal{H}$ from equation \ref{eq:tau}. Define $G = \text{im}\ \rho \leq G'$.

We have a map $\alpha = \alpha_\mathcal{M} \circ f_\mathcal{M} \colon G \rightarrow \mathbb{F}_r^{2g}$. Then $\alpha \circ \rho$ equals $\alpha_\mathcal{M} \circ \rho_\mathcal{M}$, which is the standard map $\pi_1 \Sigma_g \rightarrow H_1(\Sigma_g; \mathbb{F}_r)$. Note that on $G$ this is compatible with the abelianisations coming from each factor $G_\mathcal{H}$, in the sense that $\alpha_\mathcal{H} \circ f_{\mathcal{H}, i} \big|_G$ is equal to $\alpha$ followed by the appropriate coordinate projection $\mathbb{F}_r^{2g} \rightarrow \mathbb{F}_r^4$.

The subgroup $C' = C_\mathcal{M} \times C_\mathcal{H}^{2 g}$ is central in $G'$, and contains all the $r^k$-th powers. Define $C = G \cap C'$. We see that $(G, C, \rho, \alpha)$ satisfy the conditions \ref*{sdef:compatibility_with_abelianisation} and \ref*{sdef:landing_in_centre} of definition \ref{def:polynomial_substitute}.

\textbf{Step 2: polynomial substitute property.} Let $P$ be a polynomial of degree $r^k$. Write it in the form
\begin{equation*}
P (a_1, b_1, \dots, a_g, b_g) = Q (a_1, b_1, \dots, a_g, b_g) + \sum_{i=1}^g \sum_{u + v = r^k} c_{i, u, v} a_i^u b_i^v
\end{equation*}
where $c_{i, u, v} \in \mathbb{F}_r$, and $Q$ contains only mixed terms.

By the property \ref{pty:polyhom_nonmixed_surface}, there exist group homomorphisms $\Psi_i \colon C_\mathcal{H} \rightarrow \mathbb{F}_r$ such that for any $x \in G$ with $\alpha (x) = (a_j, b_j)^\top_j$ we have
\begin{equation*}
\Psi_i \circ f_{\mathcal{H}, i} \left( x^{r^k} \right) = R_i \big( \alpha(x) \big) + \sum_{u + v = r^k} c_{i, u, v} a_i(x)^u b_i(x)^v
\end{equation*}
where the polynomials $R_i$ contain only mixed terms. By the property \ref{pty:polyhom_mixed_surface}, there is a group homomorphism $\Psi_0 \colon C_\mathcal{M} \rightarrow \mathbb{F}_r$ satisfying
\begin{equation*}
\Psi_0 \circ f_\mathcal{M} \left( x^{r^k} \right) = \left( Q - \sum_{i=1}^g R_i \right) \big( \alpha(x) \big)
\end{equation*}
Finally, we combine them as
\begin{equation*}
\Psi = \left(\Psi_0 \circ f_\mathcal{M} + \sum_{i=1}^g \Psi_i \circ f_{\mathcal{H}, i} \right) \Bigg|_C
\end{equation*}
By construction $\Psi \big( x^{r^k} \big) = P \big( \alpha(x) \big)$, which confirms the condition \ref{def:polynomial_substitute}(c).
\end{proof}

\section{Comments}

It is natural to ask whether the assumption of solvability in Theorem \ref{thm:nonkernel_homology} is necessary. We ask the following

\begin{question}\label{cnj:outkernel_homology}
Let $g \geq 2$, $n \geq 0$ and $\theta \colon \pi_1 \Sigma_{g, n} \rightarrow Q$ be an arbitrary finite quotient. Does there exist a cover $\widetilde{\Sigma} \rightarrow \Sigma_{g, n}$ with $\tH_1^{\theta \neq 1} (\widetilde{\Sigma} ; \mathbb{Q}) \neq \tH_1 (\widetilde{\Sigma} ; \mathbb{Q})$?
\end{question}

Our proof fundamentally relies on detecting loops using abelian quotients, so solvable $Q$ seems to be the most general we can hope for with the current approach. Let us note a simple obstacle to potential approches to question \ref{cnj:outkernel_homology}:

\begin{observation}
Let $\widetilde{\Sigma} \rightarrow \Sigma$ be a finite-degree normal cover with deck group $G$, and $\theta \colon \pi_1 \Sigma \rightarrow Q$ be a nontrivial finite quotient. If the sizes of $G$ and $Q$ are coprime, then $\tH_1^{\theta \neq 1} (\widetilde{\Sigma} ; \mathbb{Z} ) = \tH_1 (\widetilde{\Sigma} ; \mathbb{Z})$.
\end{observation}

\begin{proof}
Consider $\gamma \in \pi_1 \widetilde{\Sigma}$. We want to show that $[\gamma] \in \tH_1^{\theta \neq 1} (\widetilde{\Sigma}; \mathbb{Z})$. If $\gamma \notin \ker \theta$ then this holds by definition. Otherwise, take any $\lambda \in \pi_1 \Sigma \setminus \ker \theta$. Then $[\gamma]$ can be expressed as the difference of $[\gamma \lambda^{|G|}]$ and $[\lambda^{|G|}]$; both of these classes belong to $\tH_1^{\theta \neq 1} (\widetilde{\Sigma} ; \mathbb{Z})$, so again $[\gamma] \in \tH_1^{\theta \neq 1} (\widetilde{\Sigma}; \mathbb{Z})$.
\end{proof}

\bibliographystyle{plain}
\bibliography{references}

\end{document}